\theoremstyle{plain}
\newtheorem{thm}{Theorem}[section]
\newtheorem{cor}[thm]{Corollary}
\newtheorem{lem}[thm]{Lemma}
\newtheorem{prop}[thm]{Proposition}
\theoremstyle{definition}
\theoremstyle{remark}
\theoremstyle{Example}
\theoremstyle{Conjecture}
\newtheorem{conj}{Conjecture}[section]
\theoremstyle{Question}
\newtheorem{quest}{Question}[section]
\numberwithin{equation}{section}
\begin{document}
\title{Sums of Polynomials and Clique Roots
}
\author{
Hossein Teimoori Faal
\\
Department of Mathematics and Computer Science,
\\
Allameh Tabatabai University, Tehran, Iran
\\
hossein.teimoori@atu.ac.ir
}

\date{17 December 2021}

\maketitle

\begin{abstract}

In this paper, pursuing the same line 
of ideas in the proof of 
an old longstanding open conjecture
of \emph{Kadison-Singer}
, we introduce a key lemma which we call it the interlacing 
lemma which indicates a 
necessary condition for having 
a real root for sums of polynomials with (at least) one 
real root. 
Then, as an immediate application of this simple but potentially 
useful lemma we characterize 
several class of graphs which have only clique roots. 
Finally, we conclude our paper with several interesting 
open problems and 
conjectures for interested readers.

\end{abstract}

\section{Introduction and Motivation}

The property of having only real roots for graphs polynomials of some special classes of graphs 
is of special interest for algebraic graph theorists in recent years. The reason is that 
having only real roots for a graph polynomial results 
in detecting many combinatorial features for 
the given graph. 
\\
One of the most interesting breaking through in mathematical sciences 
in recent years 
was the solution of the well-known 
\emph{Kadsion-Singer} conjecture which was 
open for more than $50$ years. 
The main idea behind the proof was 
introducing an interlacing family of polynomials 
(see \cite{HajiMehrabadi(1998)} and reference therein for more details). 
\\
We are not really aware of the origin of interlacing idea, but it seems that 
one of the early motivations comes from the famous \emph{Rolle}'s theorem in the  
calculus of one-variable functions. An immediate corollary of 
Rolle's theorem simply states that 
for a real polynomial $f(x)$, roots of 
$
f'(x)
$ 
interlace those of $f(x)$. 
\\ 
It seems that by using similar line of proofs for 
the Rolle's theorem, one can get the following simple but
interesting lemma. 

\begin{lem}\label{keylem1}
	Let $\{f_{i}(x) \}_{i}$ be a finite collection of polynomials with all real coefficients, a positive leading coefficient and at least one real roots. We also let $R_{i}$ and $r_{i}$ be the largest and second largest 
	real roots (respectively) of 
	$\{f_{i}(x) \}_{i}$. In the case $f_{i}(x)$
	has only one real root, by convention we put $r_{i}= - \infty$. If 
	$
	\max_{j} r_{j} \leq \min_{j} R_{j}
	$
	,
	then $f(x) = \sum_{j} f_{j}(x)$ has a real root 
	$R$ which satisfies 
	$
	\min_{j} R_{j} \leq R
	$ 	
	.
\end{lem}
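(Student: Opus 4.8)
The plan is to exhibit a single point $x_{0}$ at which the sum $f$ is non-positive, and then to play this against the behaviour of $f$ at $+\infty$. I would take $x_{0} = \min_{j} R_{j}$ and fix an index $k$ realizing this minimum, so that $f_{k}(x_{0}) = 0$ for free; the whole game is then to show that $f_{j}(x_{0}) \le 0$ for every remaining index $j$, which forces $f(x_{0}) = \sum_{j} f_{j}(x_{0}) \le 0$.

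For the sign estimate, note that for each $j \ne k$ the hypothesis gives $r_{j} \le \max_{i} r_{i} \le \min_{i} R_{i} = x_{0} \le R_{j}$, so $x_{0}$ sits in the interval $[r_{j}, R_{j}]$ (read as $(-\infty, R_{j}]$ when $r_{j} = -\infty$). On the open interval $(r_{j}, R_{j})$ the polynomial $f_{j}$ has no zeros — that is exactly what it means for $R_{j}$ and $r_{j}$ to be the two largest real roots — hence $f_{j}$ has a constant sign there. Because $f_{j}$ has a positive leading coefficient it is eventually positive, and it has no real root beyond $R_{j}$, so $f_{j} > 0$ on $(R_{j}, \infty)$; reading off the sign just to the left of $R_{j}$ then shows $f_{j} \le 0$ on $[r_{j}, R_{j}]$, and in particular $f_{j}(x_{0}) \le 0$. (If $x_{0}$ happens to be an endpoint, $f_{j}(x_{0}) = 0$ and there is nothing to check.)

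To conclude, observe that since every $f_{j}$ has a positive leading coefficient, no cancellation can occur among the top-degree terms, so $f = \sum_{j} f_{j}$ again has a positive leading coefficient and therefore $f(x) \to +\infty$ as $x \to +\infty$; in particular $f$ is strictly positive somewhere to the right of $x_{0}$. If $f(x_{0}) = 0$ we simply take $R = x_{0} = \min_{j} R_{j}$. If $f(x_{0}) < 0$, continuity and the intermediate value theorem yield a zero $R$ of $f$ with $R > x_{0} = \min_{j} R_{j}$. In both cases $\min_{j} R_{j} \le R$, which is the assertion.

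The step I expect to be the genuine obstacle is the sign claim ``$f_{j} \le 0$ on $[r_{j}, R_{j}]$''. It is immediate when $R_{j}$ is a simple root of $f_{j}$, since then $f_{j}$ changes sign exactly once at $R_{j}$; but if $R_{j}$ has even multiplicity then $f_{j}$ is in fact positive just to the left of $R_{j}$, and the argument collapses unless $x_{0} = R_{j}$. The clean way around this is to agree that $R_{j}$ and $r_{j}$ are the largest and second-largest real roots counted with multiplicity, so that a non-simple $R_{j}$ automatically gives $r_{j} = R_{j}$, pinching the interval $[r_{j}, R_{j}]$ down to the single point $R_{j}$ and forcing $x_{0} = R_{j}$; I would state this convention up front and carry the multiplicity bookkeeping carefully through the second paragraph.
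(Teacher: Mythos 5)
The paper offers no proof of this lemma at all --- it is merely asserted to follow from ``a similar line of proofs for Rolle's theorem'' --- so there is no house argument to compare yours against; it stands or falls on its own. Your argument is the natural one (evaluate the sum at $x_{0}=\min_{j}R_{j}$, show it is $\leq 0$ there, and play that against $f(x)\to+\infty$ via the intermediate value theorem), and it is correct once the multiplicity convention you flag at the end is adopted. That flag is not a pedantic footnote: without counting roots with multiplicity the lemma as literally stated is false. Take $f_{1}(x)=(x-1)^{2}$ and $f_{2}(x)=x$. Each has real coefficients, a positive leading coefficient and a real root; reading ``second largest real root'' set-theoretically gives $r_{1}=r_{2}=-\infty$, so $\max_{j}r_{j}=-\infty\leq 0=\min_{j}R_{j}$ and the hypothesis holds, yet $f_{1}(x)+f_{2}(x)=x^{2}-x+1$ has no real roots. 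With multiplicity one gets $r_{1}=R_{1}=1>0=\min_{j}R_{j}$, and the hypothesis correctly excludes the example. So your proof is right, and your ``obstacle'' paragraph is in fact a necessary repair to the statement of the lemma itself: promote the convention from a caveat to part of the hypothesis, and the two-case analysis (simple top root gives $f_{j}\leq 0$ on $[r_{j},R_{j}]$ by the sign change at $R_{j}$; multiple top root pinches the interval to the single point $x_{0}=R_{j}$) closes the argument completely.
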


The importance of the above \emph{key lemma} is that it simply gives interesting necessary conditions for having a real root for 
\emph{sums of polynomials}. 
Needless to say, finding necessary conditions for sums of polynomials to having a real roots can be interesting in several occasions where a new polynomial graph invariant can be expressed as a sum of polynomial invariant of its particular subgraphs. For instance, the first derivatives of several graph polynomials like characteristic polynomial, matching polynomial, independence polynomial and the clique polynomials of a given graph can be obtained as a sum of its vertex-deleted subgraphs \cite{StanBook1}.     
\\ 
In this paper, we will concentrate on the clique polynomial of a graph. By applying the key lemma to the combinatorial interpretations of the first and 
second derivatives of clique polynomials we obtain
several classes of graphs which have only clique roots. We finally conclude the paper with the couple of open questions and conjectures.

\section{Basic Definitions and Notations}

Throughout this paper, we will assume that our graphs are 
all finite, simple and undirected. For the definitions which are not appear here, one may refer to \cite{StanBook1}. 
\\ 
A subset of vertices of a graph $G$ that are pairwise adjacent is called a \emph{complete} 
subgraph or a \emph{clique} of $G$. A clique with $k$ vertices is called a $k$-\emph{clique}. The number of 
k-cliques will be denoted by $c_{k}(G)$. For a 
subset $S$ of vertices, the graph with vertex set 
$S$ and edges with end-vertices only on $S$ is called 
an \emph{induced subgraph} of $G$  
and denoted by $G[S]$. The set vertices adjacent to a vertex $v$ is called the (open) \emph{neighborhood} of 
$v$ and is denoted by $N(v)$. The subgraph obtained 
by deleting the vertex $v$ from $G$ will be denoted by $G-v$. 
In a similar way, a subgraph obtained by only removing an 
edge $e$ from $G$ is denoted by $G-e$. 
\\
A \emph{chord} of a cycle is an edge connecting two \emph{non-adjacent} vertices in the cycle. 
A \emph{chordal} graph is a graph that any cycle of length greater than \emph{three} has a chord. 
\\
For a given graph $G=(V,E)$, the \emph{ordinary generating} function of the number of cliques of $G$ is called the 
\emph{clique polynomial} of $G$ and is 
denoted by $C(G,x)$. More precisely, we have  
\begin{equation}
C(G,x) = \sum_{k=0}^{\omega(G)} c_{k}(G) x^{k},
\end{equation}
where 
$
\omega(G)
$
is the size of the largest clique of $G$. By convention, we may assume $c_{0}(G)=1$ for any 
graph $G$.  
The real root of the clique polynomial 
of $G$ is called the \emph{clique root}
of $G$. 
\\ 
The clique polynomial of a graph satisfies
the following vertex-recurrence relation:
\begin{equation}
C(G,x) = C(G-v,x) + C(G[N(v)],x). 
\end{equation} 
Similarly, we have the following edge-recurrence 
relation:
\begin{equation}
C(G,x) = C(G-e,x) + C(G[N(e)],x), 
\end{equation}
in which 
$
N(e)=N(u) \cap N(v)
$
, 
for $e=\{u,v\} \in E(G)$.

The following combinatorial interpretation of 
the first derivative of clique polynomial is 
given in \cite{Teimoori2016}.

\begin{equation}\label{DerivatCliq1}
\frac{d}{dx} C(G,x) = 
\sum_{v \in V(G)} C(G[N(v)],x). 
\end{equation}

In a similar way, one can obtain the following 
edge-version of the graph-theoretical interpretation 
of the second derivative of a clique polynomial. 

\begin{equation}\label{DerivatCliq2}
\frac{1}{2!}\frac{d^2}{dx^2} C(G,x) = 
\sum_{e \in V(G)} C(G[N(e)],x). 
\end{equation}

\section{Main Results} 

We first note that 
based on the recursive definition of chordal graphs using the idea of pasting two complete graphs along a clique, one can show that any $k$-connected chordal graphs has a clique root $-1$ of multiplicity $k$ \cite{Teimoori2016}.

Next, we need another key result. 
\begin{prop}\label{keylem2}
Let $T$ be a \emph{tree} on $n$ vertices. Then, the graph $T$ has only clique roots. Moreover, the largest and the second largest clique roots are $R=- \frac{1}{n-1}$ and 
$r=-1$, respectively. 	
\end{prop}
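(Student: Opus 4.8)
\emph{Proof proposal.} The plan is to compute the clique polynomial of a tree explicitly and then analyze the resulting quadratic. First I would observe that a tree $T$ on $n \geq 2$ vertices is acyclic, hence triangle-free, so $\omega(T) = 2$ and $c_{k}(T) = 0$ for all $k \geq 3$; together with $c_{0}(T) = 1$, $c_{1}(T) = n$, and $c_{2}(T) = |E(T)| = n-1$ this gives the closed form
\[
C(T,x) = (n-1)x^{2} + nx + 1 .
\]
(The degenerate case $n=1$ gives $C(T,x) = 1+x$ with the single clique root $-1$, which one may either exclude or treat separately.)

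Next I would verify real-rootedness by a discriminant computation: $\Delta = n^{2} - 4(n-1) = (n-2)^{2} \geq 0$, so both roots of the quadratic are real. Since $C(T,x)$ has degree $2$ and $T$ has no cliques of size $\geq 3$, this already shows that $T$ has only clique roots. Solving the quadratic then yields the two roots $\dfrac{-n \pm (n-2)}{2(n-1)}$, that is, $-\tfrac{1}{n-1}$ and $-1$.

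Finally, I would order the two roots: because $n \geq 2$ we have $0 < \tfrac{1}{n-1} \leq 1$, hence $-1 \leq -\tfrac{1}{n-1} < 0$, so the largest clique root is $R = -\tfrac{1}{n-1}$ and the second largest is $r = -1$, with equality $R = r = -1$ exactly when $n = 2$ (the case $T = K_{2}$, where $-1$ is a double root).

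There is essentially no serious obstacle here; the only points requiring care are the convention for ``second largest real root'' at the coincidence $n=2$ and the degenerate tree $n=1$. As an alternative one could argue by induction on $n$: deleting a leaf $v$ and applying the vertex recurrence $C(T,x) = C(T-v,x) + C(T[N(v)],x) = C(T-v,x) + (1+x)$, where $T-v$ is a tree on $n-1$ vertices, together with Lemma~\ref{keylem1} applied to the two summands; but the direct computation above is cleaner and also pins down the exact values of the two largest roots, which is what the ``moreover'' clause requires.
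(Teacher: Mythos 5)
Your proposal is correct. The paper states Proposition~\ref{keylem2} with no proof at all, so there is nothing to compare against; your direct computation $C(T,x) = 1 + nx + (n-1)x^{2}$, with discriminant $(n-2)^{2} \geq 0$ and roots $-\tfrac{1}{n-1}$ and $-1$, is a complete and correct verification, including the ordering of the roots and the careful handling of the degenerate cases $n=1$ and $n=2$ where the two roots coincide or the polynomial is linear.
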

An immediate corollary of Lemma \ref{keylem1} and Proposition \ref{keylem2} is the following. 
\begin{cor}\label{keyForest1} 
Let $F$ be a \emph{forest} with non-trivial components. Then the greatest clique root $R_{F}$ of $F$ satisfies the inequality $R_{F} \geq -\frac{1}{n-1}$, where $n$ 
is the size the smallest component of $F$. 
\end{cor}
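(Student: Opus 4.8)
The plan is to deduce Corollary~\ref{keyForest1} directly from the key Lemma~\ref{keylem1} by decomposing the clique polynomial of the forest $F$ as the sum of the clique polynomials of its connected components. First I would recall that if $F$ has components $T_1,\dots,T_m$, which by hypothesis are nontrivial trees, then cliques of $F$ are exactly cliques of the individual $T_j$ (no clique straddles two components), so that $C(F,x)=\sum_{j=1}^m C(T_j,x)$. This is precisely the kind of ``sum of polynomials'' to which the key lemma applies, provided we check its hypotheses for the family $\{C(T_j,x)\}_j$.

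The verification of the hypotheses is where Proposition~\ref{keylem2} does the work. Each $C(T_j,x)$ has real coefficients, and its leading coefficient is $c_{\omega(T_j)}(T_j)>0$; since each $T_j$ is a tree on at least $2$ vertices it has at least one edge, hence $\omega(T_j)=2$ and $C(T_j,x)=1+n_j x+ (n_j-1)x^2$ where $n_j=|V(T_j)|$, which visibly has the two real roots $R_j=-\tfrac{1}{n_j-1}$ and $r_j=-1$ guaranteed by Proposition~\ref{keylem2}. Thus $R_j=-\tfrac{1}{n_j-1}$ is the largest real root and $r_j=-1$ the second largest. To invoke Lemma~\ref{keylem1} I must check $\max_j r_j \le \min_j R_j$: here $\max_j r_j = -1$, while $\min_j R_j = \min_j\left(-\tfrac{1}{n_j-1}\right)=-\tfrac{1}{n-1}$ where $n=\min_j n_j$ is the size of the smallest component, and indeed $-1 \le -\tfrac{1}{n-1}$ since $n-1\ge 1$. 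So the hypothesis holds.

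Lemma~\ref{keylem1} then yields a real root $R$ of $C(F,x)=\sum_j C(T_j,x)$ with $R\ge \min_j R_j = -\tfrac{1}{n-1}$. Since $R$ is by definition a clique root of $F$, the greatest clique root $R_F$ satisfies $R_F\ge R\ge -\tfrac{1}{n-1}$, which is the claimed inequality.

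I do not expect a serious obstacle here; the only points needing a little care are (i) justifying the additivity $C(F,x)=\sum_j C(T_j,x)$ over components, which follows since a clique is connected and hence lies in a single component together with the convention $c_0=1$ being handled consistently (one should note that $C(F,x)=\prod_j C(T_j,x)$ would be the naive guess, but for the \emph{clique} polynomial with its constant term $1$ the correct identity for a disjoint union is in fact the product; so the genuinely careful step is to instead obtain the additive decomposition in the form the lemma needs — e.g.\ by writing $C(F,x)-1 = \sum_j\bigl(C(T_j,x)-1\bigr)$ and applying Lemma~\ref{keylem1} to the shifted polynomials, whose largest and second largest roots coincide with those of the $C(T_j,x)$ for $x\neq$ the relevant values, or more cleanly by grouping the tree components suitably). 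Resolving this bookkeeping correctly is the main thing to get right; once the sum is in the right form, the inequality $-1\le -\tfrac{1}{n-1}$ and a single application of the key lemma finish the proof.
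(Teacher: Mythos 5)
Your overall route --- decompose over components, apply Proposition~\ref{keylem2} to each tree, and invoke Lemma~\ref{keylem1} --- is exactly what the paper intends (it gives no written proof, calling the statement an immediate corollary of those two results). But there is a genuine gap in the decomposition step, and your attempt to patch it in the last paragraph does not work. The identity $C(F,x)=\sum_{j}C(T_j,x)$ is false for $m\ge 2$ components: each $C(T_j,x)$ has constant term $c_0(T_j)=1$, so the right-hand side has constant term $m$ while $C(F,x)$ has constant term $1$. (Your parenthetical remark that ``the correct identity for a disjoint union is in fact the product'' is also wrong --- the product counts one clique chosen from each component, which is not a clique of $F$; the correct statement is $C(F,x)-1=\sum_j\bigl(C(T_j,x)-1\bigr)$, i.e.\ $C(F,x)=\sum_j C(T_j,x)-(m-1)$.) Your proposed repair --- applying Lemma~\ref{keylem1} to the shifted polynomials $C(T_j,x)-1$ --- fails because subtracting a constant changes the roots: $C(T_j,x)-1=x\bigl(n_j+(n_j-1)x\bigr)$ has largest root $0$ and second largest root $-n_j/(n_j-1)<-1$, so the lemma applied to the shifted family only returns the useless root $0$ of $C(F,x)-1$ and says nothing about the roots of $C(F,x)$.

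The gap is closable with one extra observation. Your verification of the hypotheses of Lemma~\ref{keylem1} for the unshifted family $\{C(T_j,x)\}_j$ is correct, so the lemma yields a real root, hence a largest real root $R^*$, of $p(x)=\sum_j C(T_j,x)$ with $R^*\ge\min_j R_j=-\tfrac{1}{n-1}$. Since $p$ has positive leading coefficient and no real root beyond $R^*$, we have $p>0$ on $(R^*,\infty)$, while $C(F,R^*)=p(R^*)-(m-1)=-(m-1)\le 0$ and $C(F,x)\to+\infty$ as $x\to+\infty$; by the intermediate value theorem $C(F,x)$ has a real root in $[R^*,\infty)$, whence $R_F\ge R^*\ge-\tfrac{1}{n-1}$. (Alternatively, since everything here is quadratic, one can read the bound off $C(F,x)=1+Nx+(N-m)x^2$ with $N=\sum_j n_j$ directly.) Without some such step your argument only bounds the roots of $\sum_j C(T_j,x)$, which is not the clique polynomial of $F$.
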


From now on, for simplicity of arguments, we will 
assume that our graphs are connected. 

\begin{prop}
Let $G$ be a triangle-free graph. Then $G$ has only 
clique roots. 	
\end{prop}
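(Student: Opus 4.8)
The plan is to use the fact that triangle-freeness collapses the clique polynomial to degree at most two, and then to finish with the classical extremal bound on the number of edges of a triangle-free graph. So the first thing I would record is the shape of $C(G,x)$. Since $G$ contains no triangle we have $\omega(G)\le 2$, hence $c_k(G)=0$ for all $k\ge 3$, and therefore
\begin{equation*}
C(G,x)=1+c_1(G)\,x+c_2(G)\,x^2=1+n\,x+m\,x^2,
\end{equation*}
where $n=|V(G)|$ and $m=|E(G)|$. The same truncation can be read off from \eqref{DerivatCliq2}: for any edge $e=\{u,v\}$, a vertex of $N(e)=N(u)\cap N(v)$ would form a triangle with $u$ and $v$, so $N(e)=\emptyset$ and $C(G[N(e)],x)=1$, whence $\frac{1}{2!}\frac{d^{2}}{dx^{2}}C(G,x)=m$ is constant and $\deg C(G,x)\le 2$.

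Next I would dispose of the degenerate case. If $m=0$ then $C(G,x)=1+n\,x$ is linear (or the constant $1$ when $n=0$), which is trivially real-rooted, with root $-1/n<0$, hence a clique root. So assume $m\ge 1$, so that $C(G,x)$ is a genuine quadratic with strictly positive coefficients; in particular any real root it possesses is negative and is therefore automatically a clique root. Thus it suffices to show that the discriminant $\Delta=n^{2}-4m$ is nonnegative, which reduces the whole statement to the single inequality $m\le n^{2}/4$.

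The last step is the only one requiring anything nontrivial: by Mantel's theorem (the $K_{3}$-free instance of Turán's theorem), a triangle-free graph on $n$ vertices has $m\le\lfloor n^{2}/4\rfloor\le n^{2}/4$, so $\Delta=n^{2}-4m\ge 0$ and both roots of $C(G,x)$ are real and negative; hence $G$ has only clique roots. I do not anticipate a real obstacle here, since once $\deg C(G,x)\le 2$ is in hand, real-rootedness is \emph{equivalent} to $m\le n^{2}/4$, i.e.\ precisely Mantel's inequality. If one preferred to stay inside the paper's framework and apply Lemma \ref{keylem1} to the decomposition $C(G,x)=C(G-v,x)+C(G[N(v)],x)$ — where $G-v$ is triangle-free and $G[N(v)]$ is edgeless, so $C(G[N(v)],x)=1+\deg(v)\,x$ has the single real root $-1/\deg(v)$ — then checking the interlacing hypothesis $\max_{j}r_{j}\le\min_{j}R_{j}$ unwinds to essentially the same edge count, so the Mantel-based argument above is the most economical route.
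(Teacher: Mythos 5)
Your proof is correct, but it follows a genuinely different route from the paper's. The paper never writes $C(G,x)$ out explicitly: it applies Lemma \ref{keylem1} to the decomposition (\ref{DerivatCliq1}), $\frac{d}{dx}C(G,x)=\sum_{v\in V(G)}C(G[N(v)],x)$, noting that triangle-freeness makes every neighborhood an independent set, so each summand $1+\deg(v)\,x$ is linear with a unique real root in $[-1,0)$; the hypothesis $\max_j r_j\le\min_j R_j$ then holds vacuously (all $r_j=-\infty$), the lemma produces a real root of $\frac{d}{dx}C(G,x)$, and from this the paper concludes that $C(G,x)$ itself is real-rooted. You instead observe that $\omega(G)\le 2$ forces $C(G,x)=1+nx+mx^2$ and reduce everything to the discriminant inequality $n^2\ge 4m$, which is exactly Mantel's theorem. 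The comparison is instructive. Your argument is fully rigorous and shows that real-rootedness of $C(G,x)$ for triangle-free $G$ is \emph{equivalent} to Mantel's bound, which resonates with the paper's closing remarks about Tur\'an-type theorems; it also sidesteps a genuine weakness in the paper's sketch, since for $m\ge 1$ the linear polynomial $\frac{d}{dx}C(G,x)=n+2mx$ \emph{always} has a real root, so ``the derivative is real-rooted'' by itself carries no information about the quadratic --- the missing step is precisely $C\bigl(G,-\tfrac{n}{2m}\bigr)=1-\tfrac{n^2}{4m}\le 0$, i.e.\ Mantel again. What the paper's approach buys is uniformity of method: the interlacing lemma is the tool that is reused for the $K_4$-free and $K_5$-free propositions, where the clique polynomial has degree $3$ or $4$ and no discriminant shortcut is available, whereas your computation is intrinsically a degree-$2$ argument. (Your parenthetical alternative, applying Lemma \ref{keylem1} to $C(G,x)=C(G-v,x)+C(G[N(v)],x)$, is yet a third decomposition and is not what the paper does either.)
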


\begin{proof}
Since $G$ is triangle-free, the neighborhood of each vertex is an independent set. That is
$
C(G[N(v)],x)= 1 + r_{v}x
$	
where $r_{v}$ is the size of corresponding independent set of $v$. If $r$ denotes the smallest size of all those independent sets, then we clearly have 
\begin{equation}
-\infty = \max_{j} r_{j} < -1 \leq -\frac{1}{r} = 
\min_{j} R_{j}.
\end{equation}
Hence the conditions of Lemma \ref{keylem1} are true and the polynomial 
$$
\sum_{v \in V(G)} C(G[N(v)],x) 
$$

has a real root $-1 \leq R <0$. 
Now considering formula (\ref{DerivatCliq1}), we immediately conclude that 
$\frac{d}{dx}C(G,x)$ has a real root. Thus the graphs $G$ has only clique roots. 
\end{proof} 
 
\begin{prop}
Let $G$ be a $K_{4}$-free connected chordal graph. 
Then $G$ has only clique roots. 	
\end{prop}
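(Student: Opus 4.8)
The plan is to use the second-derivative interpretation (\ref{DerivatCliq2}) together with Lemma \ref{keylem1}, in parallel with the proof just given for the triangle-free case. Since $G$ is $K_4$-free, for every edge $e = \{u,v\}$ the common neighborhood $N(e) = N(u) \cap N(v)$ induces a graph with no edge at all — otherwise two adjacent vertices in $N(e)$ together with $u,v$ would form a $K_4$. Hence $C(G[N(e)],x) = 1 + s_e x$ where $s_e = |N(e)|$, a linear polynomial with positive leading coefficient whose unique real root is $-1/s_e$, and (by the convention in Lemma \ref{keylem1}) second-largest root $-\infty$. First I would record this and check the hypothesis $\max_e r_e = -\infty < \min_e R_e = -1/\max_e s_e$, which holds as soon as the sum in (\ref{DerivatCliq2}) is nonempty, i.e. $G$ has at least one edge. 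So Lemma \ref{keylem1} gives that $\sum_{e} C(G[N(e)],x) = \tfrac{1}{2}C''(G,x)$ has a real root in $[-1,0)$.

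Next I would bootstrap from $C''$ back to $C$. Knowing $C''(G,x)$ has a real root does not by itself force $C(G,x)$ to split over $\mathbb{R}$; the clean way is to integrate information about roots. Here is where I would bring in the structural hypothesis more seriously: $G$ is connected chordal and $K_4$-free, so $\omega(G) \le 3$, and $C(G,x) = 1 + c_1 x + c_2 x^2 + c_3 x^3$ has degree at most $3$. A cubic (or lower) with real coefficients has either one or three real roots; the obstruction to "only clique roots" is exactly a pair of complex conjugate roots together with one real root. I would rule this out by combining two facts: (i) $C''(G,x)$ is linear (degree $\le 1$) and we have shown it has a real root — automatic — so this gives little; the real leverage is (ii) applying Lemma \ref{keylem1} to the \emph{first} derivative via (\ref{DerivatCliq1}). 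For each vertex $v$, $G[N(v)]$ is itself chordal and $K_4$-free, hence triangle-free? No — $G[N(v)]$ could contain a triangle only if $G$ contained a $K_4$, so in fact $G[N(v)]$ \emph{is} triangle-free. Therefore $C(G[N(v)],x) = 1 + a_v x + b_v x^2$ with the triangle-free structure; moreover, since $G$ is chordal, $G[N(v)]$ is chordal and triangle-free, hence a forest. By Proposition \ref{keylem2} (and Corollary \ref{keyForest1}) applied componentwise, $C(G[N(v)],x)$ has only real roots, with largest root $\ge -1/(|N(v)|-1) > -1$ when the component is nontrivial, and second-largest root $= -1$ or $-\infty$.

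So I would apply Lemma \ref{keylem1} to the family $\{C(G[N(v)],x)\}_{v \in V(G)}$: each member has positive leading coefficient and only real roots, second-largest root $\le -1$, largest root $\ge -1$ (with equality only in degenerate cases, which I would handle by noting the inequality $\max_j r_j \le \min_j R_j$ is what is actually required, and both sides sit at $-1$ in the worst case). Hence $\sum_v C(G[N(v)],x) = C'(G,x)$ has a real root $R$ with $R \ge \min_j R_j \ge -1$. Now $C'(G,x)$ is a quadratic (degree $\le 2$) with a real root, hence \emph{both} its roots are real. Then $C(G,x)$ is a cubic whose derivative has two real roots $\alpha \le \beta$; by Rolle/interlacing, if $C$ had a complex conjugate pair it would have exactly one real root and $C'$ would have at most... here I would invoke the standard fact that a real cubic with all critical points real and with $C$ taking opposite-sign values at the two critical points (or the discriminant sign) has three real roots — and pin down the sign using $C(0) = 1 > 0$ together with the location $\alpha,\beta \in [-1,0)$ and $C(-1) \ge 0$ for chordal graphs (since $-1$ is a clique root or "near" one by the $k$-connected remark at the start of Section 3).

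The main obstacle I anticipate is exactly this last sign-chasing step: converting "$C'$ has all real roots" into "$C$ has all real roots" for the cubic is not purely formal and needs a genuine input about the value of $C(G,x)$ at some point in $[-1,0]$ — most naturally $C(G,-1) \ge 0$, which should follow from the chordal structure (pasting complete graphs along cliques forces $-1$ to be a root or makes $C$ nonnegative there). If that inequality is available, the cubic has a root in each of $(-\infty,\alpha]$, $[\alpha,\beta]$, $[\beta,0)$ by the intermediate value theorem, and we are done; if $\omega(G) \le 2$ the polynomial is quadratic or linear and the argument via $C'$ (or directly) is immediate. I would present the chordal/$K_4$-free reduction to $\deg C \le 3$ first, then the $G[N(v)]$-is-a-forest observation, then Lemma \ref{keylem1} on the first derivative, then the final sign argument.
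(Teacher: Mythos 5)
Your core route is the same as the paper's: observe that $G[N(v)]$ is triangle-free and chordal, hence a forest; invoke Proposition \ref{keylem2} and Corollary \ref{keyForest1} to place its largest clique root in $[-1,0)$ and its second largest at or below $-1$; apply Lemma \ref{keylem1} to the decomposition (\ref{DerivatCliq1}) to get a real root of $\frac{d}{dx}C(G,x)$; and finish using the clique root $-1$ supplied by chordality. (The opening detour through (\ref{DerivatCliq2}) is harmless but, as you note yourself, yields nothing here; the paper reserves the second derivative for the $K_5$-free case.) The paper's own proof is only a sketch and stops essentially where you stop, so you have reconstructed it faithfully, and to your credit you flag honestly that the last step is not purely formal.

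However, the way you propose to close that last step would fail. Lemma \ref{keylem1} guarantees one real root $R \geq \min_j R_j \geq -1$ of the quadratic $C'(G,x)$, hence only that the larger critical point $\beta$ of the cubic $C(G,x)$ lies in $[-1,0)$; it says nothing about the smaller critical point $\alpha$, so your claimed location $\alpha,\beta\in[-1,0)$ is not established. Worse, if $-1<\alpha$ were possible, then $C(-1)=0$ together with $C$ increasing on $(-\infty,\alpha]$ gives $C(\alpha)\geq 0$ but no control on the sign of $C(\beta)$, and your intermediate-value argument cannot produce roots in $[\alpha,\beta]$ and $[\beta,0)$. What you actually need is the opposite configuration, $\alpha\leq -1\leq\beta$, and it is available from the very decomposition you are already using: each $G[N(v)]$ is a forest with, say, $c_v$ components, so $C(G[N(v)],-1)=1-\deg(v)+(\deg(v)-c_v)=1-c_v\leq 0$, whence $C'(G,-1)=\sum_v\bigl(1-c_v\bigr)\leq 0$ and $-1$ lies between the two real roots of the quadratic $C'(G,x)$. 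Monotonicity then gives $C(\alpha)\geq C(-1)=0\geq C(\beta)$, and the cubic has a real root in each of $(-\infty,\alpha]$, $[\alpha,\beta]$, $[\beta,0)$. This evaluation at $-1$ (or something equivalent) is the missing ingredient in both your write-up and the paper's sketch.
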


\begin{proof}[Sketch of Proof]
Since the neighborhood of each vertex $v$ is a \emph{forest} $F_{v}$, then considering the fact that the smallest clique root $r_{F_v}$  of $F_{v}$ satisfies 
$r_{F_{v}} \leq -1$ from Corollary \ref{keyForest1} we conclude that $r_{F} \leq R_{F}$. Thus the key lemma implies that 
$
\frac{d}{dx}C(G,x)
$   
has a real root. Now considering the fact $C(G,x)$ has a clique root $-1$ and $
\frac{d}{dx}C(G,x)
$ has a real root and the proof is complete.  
\end{proof} 

\begin{prop}\label{keyresult1}
Let $G$ be a bi-connected $K_{5}$-free chordal graph. Then $G$ has only clique roots. 	
\end{prop}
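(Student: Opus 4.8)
The plan is to mimic the ladder of arguments used in the two preceding propositions, now climbing one more rung: triangle-free handled the neighborhood-as-independent-set case, $K_4$-free chordal handled neighborhood-as-forest, and here a $K_5$-free chordal graph has the property that the induced subgraph $G[N(v)]$ on each vertex neighborhood is a $K_4$-free chordal graph. First I would record this structural fact: in a chordal graph the neighborhood of a vertex induces a chordal graph, and if $G$ is $K_5$-free then $G[N(v)]$ is $K_4$-free. Moreover, since $G$ is bi-connected and chordal, $C(G,x)$ has $-1$ as a clique root (indeed of multiplicity $2$) by the $k$-connected chordal fact cited at the start of Section 3. So, exactly as in the $K_4$-free case, it suffices to show that $\frac{d}{dx}C(G,x) = \sum_{v\in V(G)} C(G[N(v)],x)$ has a real root, and then invoke that $C(G,x)$ already carries the clique root $-1$.

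To get a real root of the derivative I would apply Lemma \ref{keylem1} to the family $\{C(G[N(v)],x)\}_{v\in V(G)}$. Each $C(G[N(v)],x)$ is a clique polynomial of a $K_4$-free connected chordal graph (connectedness of each neighborhood follows from $G$ being bi-connected/chordal — this needs a short justification, perhaps via the existence of a clique tree or a perfect elimination ordering), hence by the previous proposition it has only real roots; its leading coefficient is $c_{\omega}(G[N(v)])>0$ and it has at least one real root. The key lemma requires $\max_v r_v \le \min_v R_v$, where $R_v$ is the largest and $r_v$ the second largest real root of $C(G[N(v)],x)$. The natural claim to establish is that for a $K_4$-free connected chordal graph $H$, the largest clique root satisfies $R_H \ge -\tfrac12$ (since the smallest clique present beyond the empty one forces behaviour near $-1$, and in fact one expects $R_H\in[-\tfrac12,0)$ for graphs containing a triangle, with $R_H=-1$ only in the edge-only case) while the second largest clique root satisfies $r_H \le -1$. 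Granting these two bounds uniformly over $v$, we get $\max_v r_v \le -1 \le -\tfrac12 \le \min_v R_v$, the hypothesis of Lemma \ref{keylem1} holds, and the sum $\frac{d}{dx}C(G,x)$ acquires a real root $R$ with $R \ge \min_v R_v$, completing the argument.

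The main obstacle is the uniform separation estimate $r_H \le -1 \le R_H$ for $C(G[N(v)],x)$ when the neighborhood genuinely contains triangles, i.e. controlling the location of the \emph{second} largest clique root of a $K_4$-free connected chordal graph. For the forest case Proposition \ref{keylem2} gave the clean values $R=-\tfrac1{n-1}$, $r=-1$; here I would need an analogue for $K_4$-free chordal graphs showing the second largest root never exceeds $-1$. I expect this to follow from the pasting/clique-sum recursive structure of chordal graphs together with the recurrence $C(G,x)=C(G-e,x)+C(G[N(e)],x)$ applied to an edge of a simplicial-vertex triangle, reducing to smaller $K_4$-free chordal graphs and trees, but making the induction close cleanly — particularly the base cases and the behaviour of the root multiplicities — is the delicate part. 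A secondary, more routine obstacle is justifying that every vertex neighborhood $G[N(v)]$ is connected; if some neighborhood fails to be connected one should instead feed each connected component separately into Lemma \ref{keylem1}, using Corollary \ref{keyForest1}-style bounds componentwise, which only strengthens the hypothesis since disconnected pieces still obey $r\le -1$.
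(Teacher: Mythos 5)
Your route is genuinely different from the paper's and, as written, it has real gaps. The paper does not touch the first derivative here at all: it observes that a $K_5$-free graph has a quartic clique polynomial, that bi-connectedness plus chordality already supplies the root $-1$ with multiplicity two, and that it therefore suffices to produce a real root of the \emph{second} derivative. By formula (\ref{DerivatCliq2}) that second derivative is (twice) $\sum_{e} C(G[N(e)],x)$, and since $G$ is $K_5$-free every edge-neighborhood $G[N(e)]$ is triangle-free, so every summand is linear, $1+r_e x$; the hypothesis of Lemma \ref{keylem1} is then trivially satisfied (each $r_j=-\infty$ by convention) and no root-location estimates for chordal neighborhoods are needed. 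Your plan of climbing one more rung with the vertex-neighborhood formula forces you to control the largest and second-largest clique roots of $K_4$-free connected chordal graphs, and that is exactly where the proposal breaks down.

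Concretely, there are two gaps. First, the separation estimate you need is unproved, and the specific bound you state, $R_H\ge -\tfrac12$, is already false for $H=K_3$ (a perfectly possible vertex neighborhood, e.g.\ in $G=K_4$), whose clique polynomial is $(1+x)^3$ with largest root $-1$. The preceding proposition only tells you that these neighborhoods have all real clique roots; neither it nor Corollary \ref{keyForest1} locates the \emph{second largest} root once the neighborhood contains triangles, so the hypothesis $\max_v r_v\le\min_v R_v$ of Lemma \ref{keylem1} is simply not available to you. Second, even granting that estimate, your concluding step is too weak: $\frac{d}{dx}C(G,x)$ is a cubic and has a real root for free, so ``the derivative has a real root and $C$ has the clique root $-1$'' cannot by itself force a quartic to have four real roots --- for instance $(1+x)^2(x^2+6x+10)$ has a double root at $-1$ and a derivative with a real root, yet two non-real roots. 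Any argument along your lines would have to exploit the quantitative conclusion $R\ge\min_v R_v$ of the key lemma together with the multiplicity-two root at $-1$, which the proposal does not do; the paper's move to the second derivative is precisely what reduces the undetermined part to a quadratic and makes the root count close.
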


\begin{proof}[Sketch of Proof]
considering the fact that $C(G,x)$ is a quartic polynomial with at least two clique roots $r=-1$, we just need to prove $\frac{d^2}{dx^2} C(G,x)$ has a real root. 
Now, since for each $e\in E(G)$ the graph 
$G[N(e)]$ is triangle-free, based on combinatorial formula (\ref{DerivatCliq2}) and Lemma \ref{keylem1} the proof is complete. 	
\end{proof}

\section{Open Questions and Conjectures}

Considering Lemma \ref{keylem1}, we come up with the following conjecture. 

\begin{conj}
Let $G$ be a connected $K_{4}$-free graphs, then 
$G$ has only clique roots. 
\end{conj}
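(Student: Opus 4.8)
The plan is to use that a $K_4$-free graph $G$ has clique number at most $3$, so that $C(G,x) = 1 + c_1 x + c_2 x^2 + c_3 x^3$, where $c_1, c_2, c_3$ count the vertices, edges and triangles of $G$. If $c_3 = 0$ then $G$ is triangle-free and we are done by the triangle-free proposition, so assume $c_3 \ge 1$; we must then show that this cubic has all three roots real, i.e. that its discriminant is nonnegative. Since a real cubic has one or three real roots, and since $C(G,0) = 1 > 0$ with positive leading coefficient, an elementary calculus argument reduces the statement to showing that $C'(G,x) = 3 c_3 x^2 + 2 c_2 x + c_1$ is real-rooted, i.e. $c_2^2 \ge 3 c_1 c_3$, and that at its critical points $x_- \le x_+$ the local maximum $C(G,x_-)$ is nonnegative while the local minimum $C(G,x_+)$ is nonpositive. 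Using the Euclidean identity $C = \tfrac{x}{3}\,C' + \tfrac13\bigl(c_2 x^2 + 2 c_1 x + 3\bigr)$, these last two conditions say exactly that $q(t) := c_2 t^2 + 2 c_1 t + 3$ is $\ge 0$ at $t = x_-$ and $\le 0$ at $t = x_+$.

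For the first condition I would start from formula (\ref{DerivatCliq1}), $C'(G,x) = \sum_{v \in V(G)} C(G[N(v)],x)$, and observe that since $G$ is $K_4$-free every induced neighbourhood $G[N(v)]$ is triangle-free (a triangle in $N(v)$ together with $v$ would be a $K_4$), so each summand is a polynomial of degree at most $2$ with only real roots, by the triangle-free proposition. One would like to conclude real-rootedness of $C'(G,x)$ by applying Lemma \ref{keylem1} to the family $\{C(G[N(v)],x)\}_v$, but its hypothesis $\max_v r_v \le \min_v R_v$ is not automatic --- a vertex with a dense bipartite neighbourhood can push some $r_v$ close to $0$ while a vertex of degree $2$ lying in a triangle forces some $R_v = -1$ --- so instead I would derive the numerical inequality $c_2^2 \ge 3 c_1 c_3$ directly, from a Kruskal--Katona-type bound on the number of triangles in a $K_4$-free graph (sharp for balanced complete tripartite graphs) together with Turán's theorem $c_1^2 \ge 3 c_2$, both of which will also be needed below. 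In the boundary case $c_2^2 = 3 c_1 c_3$ the cubic is monotone, so it is real-rooted iff it is a perfect cube; here I would prove that, for \emph{connected} $K_4$-free $G$, equality forces $G = K_{t,t,t}$ and hence $C(G,x) = (1+tx)^3$. So from now on assume $c_2^2 > 3 c_1 c_3$, giving two distinct critical points $x_\pm = \frac{-c_2 \pm \sqrt{c_2^2 - 3 c_1 c_3}}{3 c_3}$.

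The main obstacle is the remaining claim: that $q(x_+) \le 0 \le q(x_-)$, equivalently that $x_+$ lies between the two roots of $q$ (which are real by Turán) while $x_-$ does not. This is precisely where connectedness must be used: without it the statement is false, since $K_3 \sqcup K_3$ is $K_4$-free with $C(x) = 1 + 6x + 6x^2 + 2x^3$, a monotone cubic with a single real root. The approach I would pursue is to exhibit, for connected $K_4$-free $G$, an explicit clique root $\rho \in [-1,0)$ of $C(G,x)$ --- extending the phenomenon of Proposition \ref{keylem2} and Corollary \ref{keyForest1} by carefully choosing a vertex whose neighbourhood is a triangle-free graph whose clique roots we can control --- and then to show, by the interlacing reasoning underlying Lemma \ref{keylem1}, that the existence of such a root forces $C$ to dip below zero between $x_-$ and $x_+$, pinning $x_+$ and $x_-$ where we need them. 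I expect the genuinely hard steps to be (i) this use of connectivity to separate the two critical values --- i.e. ruling out the ``monotone cubic with one real root'' behaviour that disconnected $K_4$-free graphs display --- and (ii) establishing the equality characterisation $c_2^2 = 3 c_1 c_3 \Rightarrow G = K_{t,t,t}$ for connected graphs cleanly.
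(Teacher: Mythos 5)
First, a point of comparison: the paper offers no proof of this statement at all --- it appears only as a conjecture in the final section --- so there is no argument of the author's to measure yours against. What can be assessed is whether your plan would work, and unfortunately it cannot: the pivotal inequality $c_2^2 \ge 3c_1c_3$ (equivalently, real-rootedness of $C'(G,x)$) is false for connected $K_4$-free graphs, and in fact the conjecture itself is false. Let $G$ be $K_{3,3,3}$ with a single pendant vertex attached. This graph is connected and $K_4$-free, with $c_1=10$, $c_2=28$, $c_3=27$, so $c_2^2 = 784 < 810 = 3c_1c_3$. Hence $C'(G,x)=81x^2+56x+10$ has negative discriminant, $C(G,x)=1+10x+28x^2+27x^3=(1+3x)^3+x(1+x)$ is strictly increasing on $\Real$, and it has exactly one real root together with a conjugate pair of non-real roots (its cubic discriminant is $-1011$). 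So the ``monotone cubic with one real root'' behaviour you correctly identified in $K_3 \sqcup K_3$ is not cured by connectedness; it is not even cured by $2$-connectedness, since joining two same-part vertices of $K_{3,3,3}$ by a path of length two gives $1+10x+29x^2+27x^3$, whose discriminant is $-199$.

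The example also shows exactly where your two main tools break down. Chaining the Fisher/Kruskal--Katona-type bound $c_3 \le (c_2/3)^{3/2}$ with Tur\'an's theorem to get $c_2^2 \ge 3c_1c_3$ would require $c_1 \le \sqrt{3c_2}$, which is precisely the \emph{reverse} of Tur\'an's inequality $c_1 \ge \sqrt{3c_2}$ for $K_4$-free graphs; the two estimates pull in opposite directions and coincide only on Tur\'an-extremal graphs such as $K_{t,t,t}$ itself (which is why $K_{t,t,t}$ sits exactly on the boundary $c_2^2 = 3c_1c_3$, and why perturbing it by a pendant vertex tips it over). Likewise, your worry that the hypothesis $\max_v r_v \le \min_v R_v$ of Lemma \ref{keylem1} is ``not automatic'' is realised here: the pendant vertex contributes the summand $1+x$ with largest root $-1$, while each original tripartite vertex contributes $(1+3x)^2$ with a double root at $-1/3$, so $\max_v r_v = -1/3 > -1 = \min_v R_v$ and the interlacing hypothesis fails --- consistently with the fact that the sum $C'(G,x)$ genuinely has no real root. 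The constructive part of your proposal is therefore the right instinct turned into a disproof: rather than trying to pin the critical values of $C$, the family $K_{t,t,t}$ plus a small pendant attachment ($t \ge 3$) should be recorded as a counterexample, and the conjecture needs an additional hypothesis (e.g.\ some edge- or triangle-density condition, or chordality as in Proposition 3.3) before a proof is worth attempting.
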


Based on Proposition \ref{keyresult1}, the following open question naturally arises. 

\begin{quest}
Is there any $2$-connected non-chordal $K_{5}$-free graph which
has only clique roots?   
\end{quest}
Consider the wheel graph $W_{5}$ on five vertices with on extra 
edge connecting two non-adjacent vertices on the outer ring. Now it is clear that 
$
C(G,x) = (1+x)(1+5x+6x^2+x^{3})
$
has only real roots. 
\begin{conj}
The class of all $2$
-connected 
$K_{5}$-free graphs in which each edge belongs to at most two triangles has only clique roots. 
\end{conj}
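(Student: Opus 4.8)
The plan is to argue by cases on the clique number, which satisfies $\omega(G)\le 4$ since $G$ is $K_5$-free, so that $C(G,x)=1+c_1x+c_2x^2+c_3x^3+c_4x^4$ with $c_1=|V(G)|$, $c_2=|E(G)|$, $c_3=c_3(G)$ and $c_4=c_4(G)$. If $\omega(G)\le 2$ then $G$ is triangle-free and we are done by the proposition above on triangle-free graphs. For $\omega(G)\in\{3,4\}$ the natural strategy, in the spirit of the earlier propositions, is to use the derivative formulas \eqref{DerivatCliq1} and \eqref{DerivatCliq2} together with Lemma \ref{keylem1} to show that a derivative of $C(G,x)$ has only real roots, and then ``lift'' this to $C(G,x)$ itself. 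When $\omega(G)=3$ each $G[N(v)]$ is triangle-free, so $C(G[N(v)],x)=1+d_vx+t_vx^2$ where $d_v=\deg v$ and $t_v$ is the number of triangles through $v$; by Mantel's theorem $t_v\le d_v^2/4$, so each of these quadratics already has only real, nonpositive roots, and one would try to deduce from Lemma \ref{keylem1} that $\tfrac{d}{dx}C(G,x)=\sum_v C(G[N(v)],x)$ has only real roots, and similarly for $\omega(G)=4$ work with $\tfrac12\tfrac{d^2}{dx^2}C(G,x)=\sum_e C(G[N(e)],x)$, where each $G[N(e)]$ has at most two vertices because $|N(e)|$ equals the number of triangles on $e$.

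The step I expect to be the main obstacle is precisely this use of Lemma \ref{keylem1}, whose hypothesis $\max_j r_j\le\min_j R_j$ is not automatic: for instance a vertex whose neighbourhood is a balanced complete bipartite graph (the apex of a wheel $W_4$) contributes $R_v=r_v=-\tfrac12$, while a degree-two vertex lying in a triangle contributes $R_v=r_v=-1$, and both are permitted by the hypotheses, so the inequality can fail for $\sum_v C(G[N(v)],x)$ taken naively. And even granting that a derivative of $C(G,x)$ has only real roots, the lifting step is false for general polynomials (as $x^4+1$ shows), so it must exploit the coefficient structure: concretely, in the cubic case one must show $C(G,\beta)\le 0$ at the larger critical point $\beta$, equivalently that the discriminant of $1+c_1x+c_2x^2+c_3x^3$ is nonnegative, using inequalities such as $3c_3\le 2c_2$ (each edge in at most two triangles), $c_2\ge c_1$ ($2$-connected), and Kruskal--Katona/Tur\'an type bounds.

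In fact this obstacle is not merely technical: the conjecture as stated is false in the case $\omega(G)=4$. Here $\tfrac12\tfrac{d^2}{dx^2}C(G,x)=\sum_e C(G[N(e)],x)=c_2+3c_3x+6c_4x^2$, which has real roots only if $3c_3^2\ge 8c_2c_4$, and nothing in the hypotheses forces $c_2$ to be small relative to $c_3$ and $c_4$. Explicitly, let $G$ be the graph on six vertices obtained from $K_4$ on $\{a,b,c,d\}$ by adding two vertices $u,w$ and the edges $au$, $uw$, $wb$; then $G$ is $2$-connected, $K_5$-free, and every edge lies in at most two triangles, yet $C(G,x)=1+6x+9x^2+4x^3+x^4$ has second derivative $C''(G,x)=6(2x^2+4x+3)$, which is strictly positive, so $C'(G,x)$ is strictly increasing, and $C(G,x)$ has a single critical point and hence at most two real roots. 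Thus a correct statement needs an extra hypothesis forcing triangle-density — the most natural candidates being to restrict to $K_4$-free graphs (in line with the earlier $K_4$-free chordal proposition, and making $C(G,x)$ cubic), or to require that every edge lie in a triangle together with a bound on the diameter; under such a restriction the approach outlined above, with the interlacing verification of Lemma \ref{keylem1} and the lifting step as the remaining work, becomes plausible.
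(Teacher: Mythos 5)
This statement is presented in the paper only as a conjecture, with no proof offered beyond the single motivating example of $W_5$ with a chord added on the rim; so there is no argument of the author's to compare yours against. What you have produced is not a proof but a refutation, and your counterexample is correct. For $G$ obtained from $K_4$ on $\{a,b,c,d\}$ by attaching the path $a,u,w,b$: no single vertex is a cut vertex, so $G$ is $2$-connected; $u$ and $w$ have degree two, so $G$ is $K_5$-free; each edge of the $K_4$ lies in exactly two triangles and each of $au$, $uw$, $wb$ lies in none, so the triangle condition holds. The clique counts are $c_1=6$, $c_2=9$, $c_3=4$, $c_4=1$, giving $C(G,x)=1+6x+9x^2+4x^3+x^4$, whose second derivative $12x^2+24x+18$ has negative discriminant and hence no real zeros; a quartic all of whose roots are real would, by Rolle's theorem, have a second derivative with two real roots, so $C(G,x)$ has at least two non-real roots and $G$ does not have only clique roots. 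Your structural diagnosis is also exactly right: the only tool the paper supplies for the $\omega(G)=4$ case is formula (\ref{DerivatCliq2}) together with Lemma \ref{keylem1}, which at best shows $c_2+3c_3x+6c_4x^2$ has a real root, and nothing in the hypotheses bounds $c_2$ above in terms of $c_3$ and $c_4$ as the necessary discriminant condition $3c_3^2\ge 8c_2c_4$ would require.

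Two further points are worth recording. First, your observation that the hypothesis $\max_j r_j\le\min_j R_j$ of Lemma \ref{keylem1} can fail for $\sum_v C(G[N(v)],x)$ even in the $K_4$-free case (a $W_4$-apex contributes a double root at $-\tfrac12$ while a degree-two vertex on a triangle contributes a double root at $-1$) means that even the salvageable cases of this conjecture will need more than a direct application of the key lemma, and that the lemma in any case yields only one real root of the sum rather than all of them. Second, your proposed repair is consistent with the paper's own evidence: in the $W_5$-plus-chord example every edge lies in at least one triangle, and your counterexample exploits precisely the edges lying in no triangle, so adding the hypothesis that every edge lies in one or two triangles (or restricting to $K_4$-free graphs) is the natural way to restate the conjecture.
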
 
\begin{quest}
Which classes of $K_{r}$-free chordal graphs has only real roots.	
\end{quest}
We finally came up with following stronger conjecture. 
\begin{conj}
The class of $l$-connected chordal graphs 
which are $K_{l+3}$-free has only clique roots. 
\end{conj}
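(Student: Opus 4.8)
The plan is to run the pattern of Propositions 3.3--3.6 at the general level, but to replace the appeal to the interlacing lemma in the finishing step by a direct evaluation of the clique polynomial at $x=-1$. As in the paper, assume $G$ is connected, and write $l$ for the connectivity parameter. If $G$ is complete then, being $K_{l+3}$-free and $l$-connected, $G=K_{l+1}$ or $G=K_{l+2}$, so $C(G,x)=(1+x)^{l+1}$ or $(1+x)^{l+2}$ and there is nothing to prove; hence assume $G$ is not complete, so $|V(G)|\ge l+2$. A simplicial vertex $v$ of the chordal graph $G$ has $\deg(v)\ge l$ and $N(v)$ a clique, so $\{v\}\cup N(v)$ is a clique of size $\ge l+1$; combined with $K_{l+3}$-freeness this gives $\omega(G)\in\{l+1,l+2\}$ and hence $\deg C(G,x)=\omega(G)$. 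Using the quoted fact that an $l$-connected chordal graph has $-1$ as a clique root of multiplicity (at least) $l$, write $C(G,x)=(1+x)^{l}\,q(x)$ with $\deg q=\omega(G)-l\in\{1,2\}$. If $\deg q=1$ then $q$ is linear and we are done; so assume $\omega(G)=l+2$, so that $q$ is a quadratic with positive leading coefficient $c_{l+2}(G)$ and $q(0)=1$.

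Everything now reduces to proving $q(-1)\le 0$: a real quadratic with positive leading coefficient that is nonpositive at $-1$ and equal to $1$ at $0$ has two real roots, and then every root of $C(G,x)=(1+x)^{l}q(x)$ is real. To compute $q(-1)$ I would first record the $l$-th order analogue of the derivative formulas \eqref{DerivatCliq1}--\eqref{DerivatCliq2},
\[
\frac{1}{l!}\,\frac{d^{l}}{dx^{l}}\,C(G,x)=\sum_{K}C\bigl(G[N(K)],x\bigr),
\]
where the sum runs over all $l$-cliques $K$ of $G$ and $N(K)=\bigcap_{v\in K}N(v)$; this follows by comparing coefficients, the coefficient of $x^{j}$ on either side being $\binom{l+j}{l}\,c_{l+j}(G)$. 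Taylor-expanding $(1+x)^{l}q(x)$ about $x=-1$ gives $q(-1)=\tfrac{1}{l!}C^{(l)}(G,-1)$, so $q(-1)=\sum_{K}C\bigl(G[N(K)],-1\bigr)$.

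It remains to see that each summand is nonpositive. For an $l$-clique $K$ the induced graph $G[N(K)]$ is chordal (an induced subgraph of a chordal graph) and triangle-free, since a triangle inside $N(K)$ together with $K$ would form an $(l+3)$-clique; a triangle-free chordal graph is a forest. Moreover $N(K)\neq\emptyset$: no $l$-clique of $G$ is a maximal clique, because in a non-complete $l$-connected chordal graph every maximal clique $M$ is a node of a clique tree, incident to some edge $\{M,M'\}$ whose label $M\cap M'$ is a minimal vertex separator of $G$, hence of size $\ge l$; since $M\cap M'\subsetneq M$ (as $M$ is maximal), $|M|\ge l+1$. Thus $F_K:=G[N(K)]$ is a nonempty forest, and since a forest has no cliques of size $\ge 3$ we get $C(F_K,x)=1+|V(F_K)|x+|E(F_K)|x^{2}$ and therefore $C(F_K,-1)=1-|V(F_K)|+|E(F_K)|=1-c(F_K)\le 0$, where $c(F_K)\ge 1$ is the number of connected components of $F_K$. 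Summing over $K$ yields $q(-1)\le 0$, as required.

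I expect the two structural inputs about chordal graphs to be where the real work lies: the $l$-th derivative identity (routine, but one must commit to the clean combinatorial statement), and above all the claim that every $l$-clique of an $l$-connected chordal graph extends to an $(l+1)$-clique. It is exactly this extension property that keeps every $C(F_K,-1)$ nonpositive rather than possibly equal to $+1$ (the value of a clique polynomial on the empty graph), so the proof genuinely hinges on it; I would prove it via clique trees together with the fact that minimal vertex separators of an $l$-connected graph have size $\ge l$. One could alternatively try to finish through Lemma \ref{keylem1} --- the forests $F_K$ satisfy $r_{F_K}\le -1\le R_{F_K}$, so the lemma gives a real root of $\tfrac1{l!}C^{(l)}(G,x)$ --- but since $\deg C^{(l)}(G,x)=2$ this by itself does not locate the roots of $C(G,x)$, which is why I would rely on the evaluation at $-1$ instead.
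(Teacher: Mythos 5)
The statement you were given is left as an open conjecture in the paper --- there is no proof to compare against, only the special cases $l=1$ (Proposition 3.4) and $l=2$ (Proposition 3.5), both argued via the interlacing Lemma \ref{keylem1}. Your argument follows the same skeleton (simplicial vertices force $\omega(G)\in\{l+1,l+2\}$, the asserted multiplicity-$l$ root at $-1$ reduces everything to a quadratic factor $q$, and the $l$-th order analogue of \eqref{DerivatCliq1}--\eqref{DerivatCliq2} identifies $\tfrac{1}{l!}C^{(l)}$ with a sum over common neighborhoods of $l$-cliques), but your finishing move is genuinely different and, importantly, stronger: you evaluate at $x=-1$ and show $q(-1)=\sum_K C(G[N(K)],-1)=\sum_K(1-c(F_K))\le 0$, rather than merely invoking Lemma \ref{keylem1} to get \emph{some} real root of $C^{(l)}$. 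This matters, because knowing that the quadratic $C^{(l)}$ has a real root does not control the two remaining roots of $C$ itself (you note this, correctly), so the paper's own sketch of Proposition 3.5 does not actually close; your sign argument does. I checked the coefficient identity for the $l$-th derivative formula (both sides give $\binom{l+j}{l}c_{l+j}(G)$), the Leibniz computation $q(-1)=\tfrac{1}{l!}C^{(l)}(G,-1)$, and the two structural claims --- that $G[N(K)]$ is a triangle-free chordal graph, hence a forest, and that it is nonempty because every maximal clique of a non-complete $l$-connected chordal graph has size at least $l+1$ (via clique trees and the fact that minimal vertex separators have size $\ge l$) --- and they all hold; the nonemptiness of $N(K)$ is indeed the load-bearing step, since an empty neighborhood would contribute $+1$ to the sum. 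The one caveat is that your proof inherits the paper's unproven input, namely that an $l$-connected chordal graph has $-1$ as a clique root of multiplicity at least $l$ (cited to a manuscript in preparation); conditional on that, and on the standard clique-tree facts you invoke, your argument upgrades the conjecture to a theorem.
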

We believe that the last conjecture is a starting point to find another algebraic graph-theoretic proof of the well-known Tur\'{a}n-type graph theorems. 


\begin{thebibliography}{9}
\bibitem{HajiMehrabadi(1998)}
             H. Hajiabolhassan and M. L. Mehrabadi,
             \textit{On clique polynomials},
             Australasian
             Journal of Combinatorics., 18 (1998), 313-316.
             
\bibitem{Teimoori2016}
Hossein Teimoori, 
\textit{Multiplicity of the Root $-1$ for Clique Polynomials of Chordal Graphs},
In Preparation.

\bibitem{StanBook1}
R. P. Stanley, \textit{Combinatorics and commutative algebra},
Progr. Math. 41, Birkh\"{a}user Boston,
Boston, MA, 1996.


\end{thebibliography}
\end{document}